\documentclass[12pt,a4paper,reqno]{amsart}

\usepackage{graphicx} 
\usepackage{amsfonts}
\usepackage{amsmath}
\usepackage{amsthm}
\usepackage{amssymb}
\usepackage{mathtools}
\usepackage{geometry}

\usepackage{xcolor}

\newcommand{\RR}{\mathbb{R}}

\newtheorem{theorem}{Theorem}[section]

\theoremstyle{remark}
\newtheorem*{remark}{Remark}

\begin{document}

\title{Moments of Characteristic Polynomials at Random Matrix Gram Points}
\author{Christopher Hughes}
\email{christopher.hughes@york.ac.uk}
\address{Department of Mathematics, University of York, York, YO10 5GH, United Kingdom}

\author{Somin Kim}
\email{ttb535@york.ac.uk}
\address{Department of Mathematics, University of York, York, YO10 5GH, United Kingdom}

\date{}

\begin{abstract}
    We evaluate the characteristic polynomial of a random unitary matrix at the points on the unit circle where the polynomial takes real values. In analogy to the Riemann zeta function we call these the ``random matrix Gram points''. On the random matrix side we answer in the affirmative a question of Richard Hall, who expected that zeta on the Gram points should be larger than typical. We prove that for any fixed positive $k$, the $2k$th moment of the characteristic polynomial averaged over  the random matrix Gram points is larger than the  $2k$th moment of the characteristic polynomial averaged over the whole unit circle. For large $N$ the leading-order asymptotic term is the same size, but all lower-order terms are larger. In an appendix we show how to adapt the heuristic recipe of Conrey--Farmer--Keating--Rubinstein--Snaith to conjecture moments of zeta at Gram points, and show by example that the second and fourth moments calculated by following the recipe agree with previously proven results.
\end{abstract}

\maketitle

\section{Introduction}

One of the key unsolved problems concerning the Riemann zeta function is bounding its growth. Most famously the Lindel\"of Hypothesis says that $\zeta(1/2+it)$ grows no faster than $t^\epsilon$ for large $t$. A common way to attack this problem is by studying moments, as high moments of a function tell us information about large values of that function.

Building on over a century of research, Conrey--Farmer--Keating--Rubinstein--Snaith \cite{CFKRS05} created a heuristic argument, known as ``the recipe'', to argue that for $k$ a positive integer, there exists an explicit degree $k^2$ polynomial, $\widetilde K_k$, such that
\begin{equation}\label{eq:CFKRS}
\int_{-\infty}^\infty \omega\left(t\right) \left| \zeta(\tfrac12+it) \right|^{2k} dt \sim \int_{-\infty}^\infty \omega\left(t\right) \widetilde K_k\left(\log\frac{t}{2\pi}\right) \, dt
\end{equation}
where $\omega$ is a weight function depending on some asymptotic parameter, typically taken to be the indicator function on $[0,T]$ for large $T$, or sometimes $e^{-\delta t}$ for small $\delta$. The conjecture is known to be true in the case $k=1$ and $k=2$ only. For $k=1$ combining and reinterpreting the results from a number of papers \cite{HL18,Ing26} show
\begin{equation}\label{eq:SecondCntsMmt}
    \widetilde K_1(x) = x + 2\gamma
\end{equation}
and for $k=2$ the papers \cite{Ing26,HB79,Con96} show
\begin{equation}\label{eq:FourthCntsMmt}
\widetilde K_2(x) = \frac{1}{2\pi^2} x^4 + \left(\frac{8 \gamma }{\pi ^2}-\frac{24 \zeta '(2)}{\pi ^4}\right) x^3 + a_2 x^2 + a_1 x + a_0
\end{equation}
where $a_2, a_1, a_0$ are all explicit constants, too long to write here, but implicitly given in the papers above.

Intimately connected to the maximum size of zeta are the zeros of the zeta function. It is known that the number of zeros of zeta in the critical strip with height between $0$ and $T$ equals
\[
N(T) = \frac{1}{\pi} \theta(T) + 1 + S(T)
\]
where 
\begin{align*}
    \theta(T) &=  \Im \left( \log \Gamma \left( \frac{1}{4} + \frac{i T}{2} \right) \right) - \frac{T}{2} \log \pi \\
    &=\frac{T}{2} \log \left( \frac{T}{2\pi e} \right)  - \frac{\pi}{8} + O\left(\frac{1}{T}\right)
\end{align*}
is the ``smooth'' part of the counting function, and $S(T) = O(\log T)$ is the oscillatory part.

The $m$th Gram point, $g_m$ is the value $\geq 10$ such that $\theta(g_m) = m \pi$. Due to the smooth nature of $\theta(t)$, these points are very regularly spaced. Whilst it was Gram \cite{Gram1903} who numerically observed that the first few zeros of zeta interlace with the Gram points (which is another way of saying that $S(T)$ in the counting function stays small for $T$ within computable ranges), it was Hutchinson \cite{Hutchinson25} who coined the term ``Gram's Law'' for this phenomenon. Hutchinson went on to say that ``Gram expressed the belief that this law is not a general one'', and indeed we now know it regularly fails. However, it is highly likely that a positive proportion of the time  between two consecutive zeros of zeta there lies exactly one Gram point.

If a Gram points lies roughly in the middle of two consecutive zeros of zeta, then Richard Hall (in a conversation with the first author) wondered if zeta evaluated at the Gram points should be bigger than a typical value of zeta.

In order to answer this, we will use another equally well-known interpretation of Gram points: namely they are the values of $t$ which make $\zeta(\tfrac12+it)$ real. This can easily be seen to be equivalent to the prior definition of Gram points by noting that the functional equation,
\[
\zeta(s) = \chi(s) \zeta(1-s)
\]
has $\chi(\tfrac12 + i t) = e^{-2i\theta(t)}$ with the same function $\theta$ seen above.

This actually allows us to define a more general phi-Gram point for $0\leq \phi < \pi$, which we denote by $g_m(\phi)$. These are the points where  $\theta(g_m(\phi)) = m \pi + \phi$. By a similar rephrasing to the above, they are the points where
\[
\zeta(\tfrac12 +i g_m(\phi)) \in \RR e^{i\phi}
\]
Note that $g_m(0)$ (that is, when $\phi=0$) are the usual Gram points. 

Kalpokas and Stueding~\cite{KalSte09} showed that the second moment when evaluated at the phi-Gram points satisfies
\begin{multline*}
\sum_{g_m(\phi) \leq T } \left| \zeta(\tfrac12+i g_m(\phi)) \right|^2  
= \frac{T}{2\pi} \left( \left( \log \frac{T}{2\pi e} \right)^2 + (2\gamma+2\cos(2\phi)) \log \frac{T}{2\pi e} + 1 \right) \\ + O\left(T^{1/2+\epsilon}\right)
\end{multline*}
We observe, possibly for the first time, that the terms on the right-hand side can be written as
\[
\int_0^T  K_1^{\phi}\left(\log\frac{t}{2\pi}\right) \frac{1}{2\pi}\log \frac{t}{2\pi} \, dt
\]
where
\begin{equation}\label{eq:Kernelk1}
K_1^{\phi}(x) = x + 2\gamma+2\cos(2\phi)
\end{equation}

Note that $\frac{1}{2\pi} \log \frac{t}{2\pi}$ is the smooth density of the phi-Gram points up the critical line. If we compare the kernels for the continuous and discrete second moments (given in \eqref{eq:SecondCntsMmt} and \eqref{eq:Kernelk1}, respectively), we see that they have the same leading order asymptotic, but the lower order terms are different in size. When $\phi=0$ (that is, the Gram points), the kernel $K_1^0(x)$ is larger than $\widetilde K_1(x)$.

For the fourth moment, Lavrik~\cite{Lavrik95} showed
\[
\sum_{0<g_m \leq T}  \left| \zeta(\tfrac12+i g_m) \right|^4 = \frac{T}{4 \pi^3} (\log T)^5 + O \left(T (\log T)^{9/2}\right)
\]
which again, when dividing by $N(T)$ has the same asymptotic size as the continuous fourth moment.

Hung Bui and Richard Hall \cite{BuiHall} have shown (in an unpublished work still in preparation, but kindly shared with the authors) that there is a degree-5 polynomial yielding all the lower order terms, proving
\[
\sum_{0<g_m \leq T}  \left| \zeta(\tfrac12+i g_m) \right|^4 = T \sum_{j=0}^5 a_j (\log T)^j + O \left(T^{2/3+\epsilon}\right)
\]
with $a_5 = \frac{T}{4 \pi^3}$. The lower coefficients get more complicated to write, but are the $\phi=0$ specialisation of
\begin{equation}\label{eq:FourthMmtGram}
\sum_{0<g_m(\phi) \leq T}  \left| \zeta(\tfrac12+i g_m(\phi)) \right|^4 \sim \int_0^T K_2^{\phi}\left(\log\frac{t}{2\pi}\right) \frac{1}{2\pi}\log \frac{t}{2\pi} \, dt 
\end{equation}
where 
\[
K_2^\phi(x) = \widetilde K_2(x) +2\cos(2\phi) L_2(x) + 2 \cos(4\phi)
\]
where $\widetilde K_2(x)$ is the kernel for the continuous fourth moment, given in \eqref{eq:FourthCntsMmt}, and 
\begin{equation}\label{eq:L2}
L_2(x) = \frac{1}{6} x^3 +  2 \gamma  x^2 + \left(6 \gamma
   ^2-4 \gamma _1 \right)x + \left(4 \gamma ^3 -12 \gamma  \gamma _1+2 \gamma _2 \right)
\end{equation}
We note that all four coefficients in the cubic polynomial $L_2(x)$ are positive. This result will be discussed in Appendix~\ref{sect:k2Appendix}.

\begin{remark}
    Note that the Bui--Hall result is rigorously proven, whereas our prediction in the appendix comes from reinterpreting the heuristic recipe methodology.
\end{remark}

The purpose of this paper is to use random matrix theory to argue that the answer to Hall's question, whether zeta is larger at the Gram points than typical, is affirmative: we conjecture that the $2k$th moment of the Riemann zeta function at the Gram points is always bigger than the $2k$th continuous moment, but only in the sub-leading terms. We anticipate the leading order term will be the same size as the continuous moments. We arrive at our conjecture by analysing an analogous problem in random matrix theory.

\subsection{The random matrix model and our results}

The conjecture \eqref{eq:CFKRS} partly grew out of the work by Keating and Snaith~\cite{KS00a} on modelling the Riemann zeta function using random matrix theory, specifically the characteristic polynomial
\[
Z_U(\theta) = \det\left(I - U e^{-i\theta}\right)
\]
of a Haar-distributed random unitary matrix.

Keating and Snaith proved that the $2k$th moment of this (for $\Re(k)>-1/2)$, taken over all $N\times N$ unitary matrices chosen with Haar measure equals
\begin{align*}
\mathbb{E}\left[ Z(0)|^{2k} \right] &= \prod_{j=1}^N \frac{\Gamma(j) \Gamma(j+2k)} {\Gamma(j+k)^2} \\
&=\frac{G(1+k)^2}{G(1+2k)} N^{k^2}
\end{align*}
where $\Gamma$ is the usual Gamma function, and $G$ is Barnes $G$-function, a meromorphic function which satisfies $G(z+1) = \Gamma(z) G(z)$. 

The aim of this paper is to investigate moments of characteristic polynomials at Gram points in the random matrix theory model. We compare these moments with the classical CUE averages over \(N\times N\) unitary matrices. Although the leading-order main term agrees with that of the 
 \(\mathbb{E}[|Z(0)|^s]\), we show that the moments evaluated at Gram points are in fact larger. We now state the main theorems of this paper.

First, building on the work of Hanga~\cite{Hanga20} we define the random matrix phi-Gram points.
\begin{theorem} \label{thm:GramPoints}
Let $U$ be an $N \times N$ unitary matrix with $\det(U) = e^{i\omega}$. For $0 \leq \phi < \pi$, let
\[
g_m^\phi = \frac{\omega}{N} + \pi N  - \frac{2\phi}{N} + \frac{2\pi m}{N},
\]
for $m=1,\dots,N$. At the points $g_m^\phi$, the characteristic polynomial of $U$ satisfies \[Z_U(g_m^\phi) \in \RR e^{i\phi} .\]
\end{theorem}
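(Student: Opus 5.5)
The plan is to derive a functional equation for $Z_U$ relating $Z_U(\theta)$ to its complex conjugate. This is the random matrix analogue of $\zeta(s)=\chi(s)\zeta(1-s)$. Once it is in hand, we read off the points where the phase is $\phi$ modulo $\pi$.

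First, let the eigenvalues of $U$ be $e^{i\theta_1},\dots,e^{i\theta_N}$, so that $\prod_j e^{i\theta_j}=\det U=e^{i\omega}$. Then
\[
Z_U(\theta)=\prod_{j=1}^N\left(1-e^{i(\theta_j-\theta)}\right).
\]
For each factor we use the identity
\[
1-e^{i\alpha}=-e^{i\alpha}\left(1-e^{-i\alpha}\right)=-e^{i\alpha}\,\overline{\left(1-e^{i\alpha}\right)},
\]
applied with $\alpha=\theta_j-\theta$. Multiplying over $j$ gives
\[
Z_U(\theta)=(-1)^N e^{i\omega}e^{-iN\theta}\,\overline{Z_U(\theta)}=e^{i(\omega+\pi N-N\theta)}\,\overline{Z_U(\theta)}.
\]

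Next, write $Z_U(\theta)=R e^{i\psi}$ whenever $Z_U(\theta)\neq 0$. The functional equation then forces
\[
e^{2i\psi}=e^{i(\omega+\pi N-N\theta)},
\]
so that $\psi\equiv \tfrac12(\omega+\pi N-N\theta)\pmod \pi$. Equivalently, $e^{-i(\omega+\pi N-N\theta)/2}Z_U(\theta)$ is real for every $\theta$, including at zeros of $Z_U$. The condition $Z_U(\theta)\in\RR e^{i\phi}$ is therefore implied by
\[
\tfrac12(\omega+\pi N-N\theta)\equiv\phi\pmod\pi.
\]
This condition is $N\theta=\omega+\pi N-2\phi-2\pi m$ for some integer $m$. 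We take $m\mapsto -m$, which is harmless since the residue class is all that matters and the exponent $e^{-iN\theta}$ is unchanged. Solving then gives the stated values
\[
g_m^\phi=\frac{\omega}{N}+\pi N-\frac{2\phi}{N}+\frac{2\pi m}{N}.
\]
Plugging these back in, the phase factor equals $e^{i\phi}e^{-i\pi m'}$ for an integer $m'$, hence it lies in $\RR e^{i\phi}$.

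The only delicate point is keeping track of the sign and branch conventions. These are the factor $(-1)^N$ becoming $e^{i\pi N}$, the halving of the angle, which is only defined mod $\pi$, and the sign of $m$. Note that $2\pi m/N$ versus $-2\pi m/N$ gives the same set of points modulo $2\pi$ as $m$ ranges over $1,\dots,N$. I would verify this directly by substituting $\theta=g_m^\phi$ into $e^{-i(\omega+\pi N-N\theta)/2}$. That substitution gives $e^{-i(2\phi-2\pi m)/2}\cdot(\text{a root of unity }\pm1)$. To see this, note that $N\theta=\omega+\pi N^2-2\phi+2\pi m$. Also $\pi N-\pi N^2=\pi N(1-N)$ is an even multiple of $\pi$. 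So the exponent is $-i(-2\phi+2\pi m+2\pi k)/2$ for some integer $k$, giving $e^{i\phi}(\pm1)$. Hence $Z_U(g_m^\phi)$ is real times $e^{i\phi}$, and the $N$ values of $m$ give $N$ distinct points on the circle.
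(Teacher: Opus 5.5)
Your proof is correct and follows the paper's route. Like the paper, you derive $Z_U(\theta)=e^{i(\pi N-N\theta+\omega)}\overline{Z_U(\theta)}$ and substitute $\theta=g_m^\phi$, using that $N(N-1)$ is even so the phase factor collapses to $e^{2i\phi}$. One small slip: in your final check, $e^{-i(\omega+\pi N-N\theta)/2}$ at $\theta=g_m^\phi$ equals $\pm e^{-i\phi}$ (as your preceding sentence correctly says), not $\pm e^{i\phi}$, and it is the former that yields $Z_U(g_m^\phi)\in\RR e^{i\phi}$.
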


Next we will evaluate the characteristic polynomial evaluated at these phi-Gram points, averaged over Haar-distributed $N\times N$ unitary matrices.
\begin{theorem}\label{thm:ExactMmtAbsVal}
Let \(g_m^\phi\) denote the random matrix phi-Gram points and let \(Z\) be the characteristic polynomial. Taking the expectation over $N \times N$ unitary matrices with respect to Haar measure, one finds that for $\Re(s)>-1$
\[
\mathbb{E}\left[
\frac{1}{N} \sum_{m=1}^N \left| Z(g_m^\phi) \right|^{s}\right]
= \mathbb{E}[\left| Z(0) \right|^{s}] \left( 1 + 2\sum_{\ell=1}^\infty \cos(2\ell\phi)C_\ell^{N}(s) \right)
\]
where
\begin{equation}\label{eq:CellN}
C_\ell^N(s) = \prod_{j=1}^{N}
\frac{\Gamma\!\left(\tfrac{s}{2} + j\right)^2 }{\Gamma\!\left(\tfrac{s}{2} + \ell + j\right)\Gamma\!\left(\tfrac{s}{2} - \ell + j\right)}.
\end{equation}
where, in the case $s$ is an even integer, we interpret $C_\ell^N(s) = 0$ for $\ell>s/2$.
\end{theorem}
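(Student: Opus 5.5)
The plan is to treat the Gram-point average as a discrete Fourier filter in the variable $\theta$, reduce it to Fourier coefficients of $|Z_U(\theta)|^s$ twisted by a power of $\det U$, and evaluate those coefficients with a Selberg/Morris-type integral.

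Write the eigenvalues of $U$ as $e^{i\theta_j}$. Then
\[
Z_U(\theta)=\prod_j(1-e^{i(\theta_j-\theta)}) = e^{i\omega/2}\, e^{-iN\theta/2}\, i^{N}\cdots
\]
More precisely, $Z_U(\theta)=e^{iN(\pi-\theta)/2}e^{i\omega/2}\prod_j 2\sin\frac{\theta-\theta_j}{2}$, with the last product real. This is the identity behind Theorem~\ref{thm:GramPoints}. It gives
\[
|Z_U(\theta)|^s=\bigl(Z_U(\theta)\overline{Z_U(\theta)}\bigr)^{s/2}.
\]
The Gram points are the $N$ equally spaced points where the phase $e^{i(\omega-N\theta+N\pi)/2}$ equals $e^{i\phi}$ up to sign.

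The key observation is that $\frac1N\sum_{m=1}^N f(g_m^\phi)$ for a $2\pi$-periodic $f$ picks out the Fourier modes of $f$ that are multiples of $N$:
\[
\frac1N\sum_m f(g_m^\phi)=\sum_{\ell\in\mathbb Z}\hat f(\ell N)\,e^{i\ell N(\omega/N+\pi N-2\phi/N)}.
\]
The phase factor here is $e^{i\ell\omega}(-1)^{\ell N^2}e^{-2i\ell\phi}$. Applying this to $f(\theta)=|Z_U(\theta)|^s$ and taking expectations, I need
\[
\mathbb E\bigl[\hat f(\ell N)e^{i\ell\omega}\bigr]=\frac1{2\pi}\int_0^{2\pi}\mathbb E\bigl[|Z_U(\theta)|^s\det U^{\ell}\bigr]e^{-i\ell N\theta}\,d\theta.
\]
By rotation invariance of Haar measure, replacing $U$ by $Ue^{i\theta}$ gives $\det U^\ell\mapsto e^{i\ell N\theta}\det U^\ell$. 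So the integrand is constant in $\theta$, and the quantity reduces to $\mathbb E[|Z_U(0)|^s\det U^{\ell}]$.

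The main computation, which I expect to be the main obstacle, is evaluating
\[
M_\ell(s)=\mathbb E\Bigl[\prod_j|1-e^{i\theta_j}|^s e^{i\ell\theta_j}\Bigr].
\]
I would use Weyl's integration formula, which turns this into an $N$-fold circular integral of $|\Delta|^2\prod_j|1-e^{i\theta_j}|^s e^{i\ell\theta_j}$. This is exactly Morris's constant-term identity, equivalently the Selberg integral in circular form with parameters $a=s/2+\ell$, $b=s/2-\ell$ (in the form $\prod(1-z_j)^a(1-1/z_j)^b$). That identity gives
\[
\prod_{j=0}^{N-1}\frac{\Gamma(1+a+b+j)\Gamma(1+j)\cdots}{\Gamma(1+a+j)\Gamma(1+b+j)},
\]
with a sign $(-1)^{\ell N}$ coming from $|1-e^{i\theta}|^s e^{i\ell\theta}=(1-e^{i\theta})^{s/2+\ell}(1-e^{-i\theta})^{s/2-\ell}(-1)^\ell$ on the principal branch. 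Dividing by the $\ell=0$ case (the Keating--Snaith product) yields $M_\ell/M_0=\pm C^N_\ell(s)$. The sign bookkeeping between $(-1)^{\ell N}$, $(-1)^{\ell N^2}$ and this branch factor has to cancel, and I would check it carefully, possibly against $N=1$.

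For the last step, note that $C_\ell=C_{-\ell}$, so pairing $\pm\ell$ gives $2\cos(2\ell\phi)$. When $s$ is an even integer, $1/\Gamma(s/2-\ell+j)$ vanishes for some $j$ once $\ell>s/2$, matching the stated convention since $f$ is then a trigonometric polynomial of degree $sN/2$. For general $\Re s>-1$, I would justify the Fourier expansion and the interchange of sum and expectation by absolute convergence of $\sum_\ell C^N_\ell(s)$. This follows from the asymptotics $C_\ell^N\asymp \ell^{-N(s+1)}$ via the reflection formula, and $N(s+1)>1$ when $N\ge2$. For $N=1$ one gets conditional convergence of the Fourier series of a continuous function, which can be handled by Fejér summation or directly.
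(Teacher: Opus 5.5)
Your proposal is correct and is essentially the paper's proof. Both filter onto the Fourier modes $\ell N$, remove the $\theta$-integral by rotation invariance (the paper does this as the shift $\theta_n\mapsto\theta_n+\theta$ in the Weyl integral), and evaluate with the Morris--Selberg integral at $a=\tfrac s2+\ell$, $b=\tfrac s2-\ell$; your branch factor $(-1)^{\ell N}$ does cancel the Gram phase $(-1)^{\ell N^2}$ since $N^2\equiv N \pmod 2$, whereas the paper absorbs the sign by shifting $\theta_n\mapsto\theta_n+\pi$.

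One slip in your convergence remark, which the paper does not address: the reflection formula actually gives $|C_\ell^N(s)|\ll \ell^{-N(\Re s+N)}$, not $\ell^{-N(s+1)}$. You need the correct exponent, because $N(\Re s+1)>1$ fails for $\Re s$ near $-1$, whereas $N(\Re s+N)>N(N-1)\ge 2$ for all $N\ge 2$.
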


From that theorem we deduce our two main results.
\begin{theorem}\label{thm:first-main}
For  \(\Re(s)>-1\) and for any $0\leq \phi < \pi$, we have
\[
\mathbb{E}\left[
\frac{1}{N} \sum_{m=1}^N \left| Z(g_m^\phi) \right|^{s}\right]
= \mathbb{E}[\left| Z(0) \right|^{s}] \left( 1 + O\left(\frac{1}{N}\right) \right).
\]
\end{theorem}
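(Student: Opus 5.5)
The plan is to deduce the statement directly from Theorem~\ref{thm:ExactMmtAbsVal}. Write $a=s/2$, so that $\Re(a)>-1/2$. It then suffices to show that
\[
\sum_{\ell=1}^\infty \left|C_\ell^N(s)\right| = O\!\left(\frac1N\right),
\]
since $|\cos(2\ell\phi)|\le 1$ makes the bound uniform in $\phi$. When $s$ is an even integer the sum is finite by the stated convention. For each fixed $\ell$ I would work with the Barnes $G$ form of the coefficient. Using $\prod_{j=1}^N\Gamma(x+j)=G(x+N+1)/G(x+1)$,
\[
C_\ell^N(s)=\frac{G(a+\ell+1)\,G(a-\ell+1)}{G(a+1)^2}\cdot\frac{G(a+N+1)^2}{G(a+\ell+N+1)\,G(a-\ell+N+1)} .
\]
The standard asymptotic $\log G(z+N+1)=\tfrac{N^2}{2}\log N-\tfrac34N^2+(z+\tfrac12)N\log(2\pi)\dots+\tfrac{z^2}{2}\log N+\dots$ shows that the second ratio is $N^{-\ell^2}(1+o(1))$ for fixed $\ell$. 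So each term with $\ell\ge1$ is $O(N^{-1})$, and the $\ell=1$ term is exactly of that order. What remains is a tail estimate uniform in $\ell$.

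For the uniform bound I would split the $\ell$-range. Two identities drive it. First, pair the Gamma factors in each $j$-factor:
\[
\frac{\Gamma(a+j)^2}{\Gamma(a+j+\ell)\Gamma(a+j-\ell)}=\prod_{i=0}^{\ell-1}\frac{a+j-\ell+i}{a+j+i}=\prod_{i=0}^{\ell-1}\Bigl(1-\frac{\ell}{a+j+i}\Bigr).
\]
\begin{itemize}
\item \emph{Range $\ell\le \varepsilon N$.} For $j\ge 2\ell+|a|+1$ each $j$-factor has modulus at most $\exp(-c\ell^2/(j+\ell))$. Multiplying over $\ell\ll j\le N$ gives a decay of $\exp(-c\ell^2\log(N/\ell))$. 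The finitely many initial factors $j\le 2\ell+|a|$ are bounded by an explicit $G$-ratio of size $\exp(C\ell^2\log \ell)$. Together this gives
\[
|C_\ell^N|\le \exp\bigl(C\ell^2\log\ell - c\ell^2\log(N/\ell)\bigr),
\]
which sums over $1\le\ell\le\varepsilon N$ to $O(N^{-1})$, dominated by $\ell=1$.
\item \emph{Range $\ell>N+|a|+1$.} Every $\Gamma(a+j-\ell)$ has negative real-part argument. I would apply the reflection formula $1/\Gamma(x)=\Gamma(1-x)\sin(\pi x)/\pi$, so each $j$-factor becomes, up to a bounded trigonometric factor, $\Gamma(a+j)^2\Gamma(\ell-a-j+1)/\Gamma(a+j+\ell)\approx \ell^{1-2a-2j}$. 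The product is then $\ll \ell^{-N^2-2\Re(a)N}$ up to constants depending on $N$ and $s$. This is summable, and it is super-polynomially small in $N$, using $\Re(a)>-1/2$.
\item \emph{Intermediate range $\varepsilon N<\ell\le N+O(1)$.} Here I would use the same two representations, with Stirling applied to the $G$-functions. The aim is to show $|C_\ell^N|\le e^{-cN^2}$, say.
\end{itemize}

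The main obstacle is making this intermediate range, and the transition to the negative-argument range, rigorous and uniform. Two things complicate it. The factors $\sin(\pi(a+j-\ell))$ can be small or large depending on $s$, although they are bounded away from problems once $s$ is not an even integer, and are zero in the even-integer case. For complex $s$ one must also control $|\Gamma|$ ratios. First I would try to write $\log|C_\ell^N|$ via the $G$-formula and uniform Stirling for $\log G$, treating it as a function of $t=\ell/N$. Its leading term is $-N^2 f(t)$ with an explicit $f(t)>0$ for $t>0$, and I would verify this positivity directly. Combining the three ranges gives the total sum $O(1/N)$, and Theorem~\ref{thm:ExactMmtAbsVal} then yields the result.
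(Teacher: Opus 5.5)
You reduce the problem correctly to $\sum_{\ell\ge1}|C_\ell^N(s)|=O(1/N)$, and your fixed-$\ell$ asymptotics $C_\ell^N(s)\asymp N^{-\ell^2}$ are fine. However, the uniform-in-$\ell$ estimate, which is the whole content of the theorem, is never actually obtained. There are three concrete problems.

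(i) In the range $\ell\le\varepsilon N$, your bound $\exp\bigl(C\ell^2\log\ell-c\ell^2\log(N/\ell)\bigr)$ does \emph{not} sum to $O(1/N)$. At $\ell=\varepsilon N$ the exponent is $\ell^2\bigl(C\log(\varepsilon N)-c\log(1/\varepsilon)\bigr)\to+\infty$, so the bound only controls $\ell\le N^{c/(C+c)}$. The loss comes from your $\exp(C\ell^2\log\ell)$ estimate for the initial factors $j\le 2\ell+|a|$; each of those is in fact $O_s(1)$ in modulus.

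(ii) The intermediate range is only a plan. The function $f(t)$ is never written down or shown to be positive, and for $\ell$ near $N$ the argument of $G(a-\ell+N+1)$ is $O(1)$, where Stirling gives nothing.

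(iii) In the tail, your ``constants depending on $N$'' include $\prod_j|\Gamma(a+j)|^2=e^{N^2\log N+O(N^2)}$. This is of the same order as $\ell^{-N^2}$ at $\ell\approx N$, so smallness in $N$ does not follow as written.

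The missing idea is the one the paper works with, namely the one-step ratio
\[
\frac{C_{\ell+1}^N(s)}{C_\ell^N(s)}=\prod_{j=1}^N\frac{\tfrac{s}{2}+j-\ell-1}{\tfrac{s}{2}+j+\ell},\qquad \bigl|\tfrac{s}{2}+j+\ell\bigr|^2-\bigl|\tfrac{s}{2}+j-\ell-1\bigr|^2=(2\ell+1)(\Re s+2j-1)>0 .
\]
Every factor therefore has modulus $<1$, so $|C_\ell^N(s)|$ is decreasing in $\ell$. This makes your intermediate range unnecessary.

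\begin{itemize}
\item For $\ell\le\varepsilon N$ (with $\varepsilon$ small and $N$ large), the factors with $j\gg\ell$ push the ratio below $\tfrac12$. This gives $\sum_{\ell\le\varepsilon N}|C_\ell^N|\le 2|C_1^N|=O(1/N)$, since $C_1^N(s)=s/(2N+s)$. It also gives $|C_{\ell_0}^N|\le 2^{1-\ell_0}|C_1^N|$ at $\ell_0=\lfloor\varepsilon N\rfloor$.
\item For $\ell\ge\ell_0$ (and $N\ge2$), the $j=1,2$ factors alone give $|C_{\ell+1}^N/C_\ell^N|\le 1-\kappa/\ell$ for some $\kappa>1$. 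Their decrements $(\Re s+1)/\ell$ and $(\Re s+3)/\ell$ add to more than $2/\ell$. Hence the tail is $\ll \ell_0|C_{\ell_0}^N|$, which is exponentially small.
\end{itemize}

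Be aware that the paper goes further. It asserts $|C_{\ell+1}^N|\le|C_1^N||C_\ell^N|$ for \emph{all} $\ell$, and hence $|C_\ell^N|\le|C_1^N|^\ell$. For $s\notin 2\mathbb{Z}$ this fails for large $\ell$, because the ratio above tends to $(-1)^N$ as $\ell\to\infty$. For example, with $N=1$ and $s=1$ one has $|C_\ell^1(1)|=1/(4\ell^2-1)$, so $|C_3^1|=\tfrac{1}{35}>|C_1^1||C_2^1|=\tfrac{1}{45}$. Your observation that the tail decays only polynomially, like $\ell^{-N^2-N\Re s}$, is correct. That is exactly why a separate tail argument, such as the one sketched above, is needed.
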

This theorem shows that the main term of the moments at Gram points agrees with that of the corresponding CUE moments.
\begin{theorem}\label{thm:second-main}
For $\phi=0$ and for all real and positive \(s\),
\[
\mathbb{E}\left[\frac{1}{N} \sum_{m=1}^N \left| Z(g_m^0) \right|^{s}\right] \geq  \mathbb{E}[\left| Z(0) \right|^{s}] .
\]
\end{theorem}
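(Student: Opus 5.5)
The plan is to turn the inequality into a statement about the density of a sum of independent random angles, using Theorem~\ref{thm:ExactMmtAbsVal}. At $\phi=0$ every cosine equals $1$, so with $s>0$ real and $\mathbb{E}[|Z(0)|^s]>0$ the claim is equivalent to
\[
1+2\sum_{\ell=1}^\infty C_\ell^N(s)\;\geq\;1, \qquad\text{i.e.}\qquad \sum_{\ell\ge1} C_\ell^N(s)\ge 0 .
\]
Termwise positivity fails. Telescoping the product gives
\[
C_\ell^N(s)=\prod_{i=0}^{\ell-1}\frac{(s/2-i)_N}{(s/2+1+i)_N},
\]
and already $C_2^N$ is negative when $s<2$. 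So the signs must be handled collectively, and I would do that through a probabilistic interpretation.

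\emph{Step 1: Fourier coefficients.} For $\nu>-1$ the classical Beta-type integral gives
\[
\int_{-\pi/2}^{\pi/2}\cos^{\nu}x\, e^{2i\ell x}\,dx=\frac{\pi\,\Gamma(\nu+1)}{2^{\nu}\Gamma(\nu/2+1+\ell)\Gamma(\nu/2+1-\ell)}.
\]
Taking $\nu=s+2j-2$, so that $\nu/2+1=s/2+j$, the $j$th factor of \eqref{eq:CellN} is exactly $\mathbb{E}[e^{2i\ell X_j}]$. Here $X_j$ has density $c_j\cos^{s+2j-2}x$ on $(-\pi/2,\pi/2)$, normalised so that the $\ell=0$ coefficient is $1$. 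Taking the $X_j$ independent,
\[
C_\ell^N(s)=\mathbb{E}\bigl[e^{2i\ell S}\bigr],\qquad S=X_1+\dots+X_N .
\]
This matches the Keating--Snaith picture, in which the $|Z|^s$-tilted measure splits into independent factors.

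\emph{Step 2: reduction to a density bound.} By Fourier inversion on $\RR/\pi\mathbb{Z}$, the bracket $1+2\sum_\ell \cos(2\ell\phi)C_\ell^N(s)$ equals $\pi D(\phi)$. Here $D$ is the density of $S \bmod \pi$, which is even because each $X_j$ is symmetric. Thus Theorem~\ref{thm:second-main} is equivalent to $D(0)\ge 1/\pi$: the wrapped density at the origin is at least its mean value. As a consistency check, integrating over $\phi$ recovers the identity $1$, as it must.

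\emph{Step 3: the main obstacle, $D(0)\ge 1/\pi$.} Each $\cos^{s+2j-2}$ on $(-\pi/2,\pi/2)$ is symmetric and log-concave, since $s+2j-2\ge0$ for $j\ge1$ and $s>0$. Hence the density $f$ of $S$ on $\RR$ is symmetric and log-concave, by Pr\'ekopa's theorem, and
\[
D(0)=\sum_{k\in\mathbb{Z}} f(k\pi).
\]
A crude Riemann-sum comparison using only unimodality yields $\pi\sum_k f(k\pi)\ge 1-\pi f(0)$, which is not enough. I would therefore try two refinements in turn.

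The first is to show directly that $D$ is nonincreasing on $[0,\pi/2]$. I would attempt this by induction on $N$: the circular convolution of a symmetric, unimodal density on $\RR/\pi\mathbb{Z}$ with $\cos^{\nu}$ preserves symmetric unimodality about $0$. This is plausible because $\cos^\nu x$ is a power of the "circular log-concave" kernel $\cos x$, and one could try to verify it via a sign-change or total-positivity argument for the kernel $\cos^\nu(x-y)$ on the circle. Once $D$ is unimodal with mode at $0$, the inequality $D(0)\ge \frac1\pi\int D=1/\pi$ is immediate.

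If that fails, the fallback is to split off $X_1$ and write
\[
D(0)=\mathbb{E}\bigl[c_1\cos^{s}(Y)\bigr]\quad\text{with } Y=S-X_1 \bmod \pi,
\]
which is valid because $\cos^s$ is $\pi$-periodic in absolute value. I would then compare $\mathbb{E}[c_1|\cos Y|^s]$ with $1/\pi=c_1\cdot\frac{1}{\pi}\int|\cos|^s$ using the symmetric unimodality of $Y$ and the fact that $|\cos|^s$ is symmetric decreasing on $[0,\pi/2]$. This is again a rearrangement-type (Hardy--Littlewood) inequality on the circle.

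Either way, the crux is establishing unimodality of the wrapped sum. That is where I expect the real work of the proof to lie.
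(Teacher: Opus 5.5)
\textbf{Verdict.} Your reduction is correct, and it is genuinely different from the paper's proof. But the proposal stops exactly at its crux, which is left unproven; that step is true and can be closed in a few lines, as below.

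\textbf{The reduction.} The Beta integral does give $C_\ell^N(s)=\mathbb{E}[e^{2i\ell S}]$. So the bracket in Theorem~\ref{thm:ExactMmtAbsVal} equals $\pi D(\phi)$, and the statement is exactly $D(0)\ge 1/\pi$.

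\textbf{The gap.} The unimodality of $D$ is only called plausible, and the tools you mention do not prove it. Pr\'ekopa is a dead end: log-concavity of $f$ on $\RR$ says nothing about the wrap. For example, the uniform density on $[-3\pi/4,3\pi/4]$ is symmetric and log-concave, yet its wrap mod $\pi$ has $D(0)=2/(3\pi)<1/\pi$.

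\textbf{How to close it.} Use the fact that each $X_j$ lives in a single period. Then $c_j|\cos x|^{s+2j-2}$ is already even and non-increasing on $[0,\pi/2]$ as a density on $\mathbb{T}=\RR/\pi\mathbb{Z}$. Combine this with a circle version of Wintner's theorem: circular convolution preserves that property. The lemma has a short proof.
\begin{itemize}
\item Any such density is a mixture of arc densities $u_a=\frac1{2a}\mathbf{1}_{[-a,a]}$ with $0<a\le\pi/2$, so it suffices to treat $u_a*u_b$.
\item On $\RR$, $u_a*u_b$ is a trapezoid $f$ supported in $[-(a+b),a+b]\subset[-\pi,\pi]$. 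It is flat for $|y|\le|a-b|$ and has slope $-1/(4ab)$ beyond.
\item For $x\in[0,\pi/2]$ the wrap is $f(x)+f(\pi-x)$. The second term is non-decreasing and increases only where $\pi-x<a+b$.
\item There $x>\pi-a-b\ge|a-b|$, since $\max(a,b)\le\pi/2$, and $x\le\pi-x<a+b$. So $f(x)$ is on its falling edge and the two slopes cancel.
\item Hence the wrap is non-increasing on $[0,\pi/2]$.
\end{itemize}
Induction over $j$ then shows $D$ is even and non-increasing on $[0,\pi/2]$, so $D(0)\ge\frac1\pi\int_{\mathbb{T}}D=\frac1\pi$. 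Your Chebyshev-type fallback needs the same lemma for $N-1$ factors, so it is not an independent route.

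\textbf{Comparison with the paper.} With the lemma supplied, your argument is sturdier than the paper's. The paper proves $\sum_{\ell\ge1}C_\ell^N(s)\ge0$ by absolute domination. For $s\le 2N$ it uses \eqref{eq:BoundCellInTermsOfC1}, $|C_\ell|\le C_1^\ell$, to get $C_1\ge\sum_{\ell\ge2}|C_\ell|$; for $s>2N$ it uses a ratio estimate. The factorwise comparison $|s/2-k+j-1|\le|s/2-k+j|$ behind \eqref{eq:BoundCellInTermsOfC1} is false whenever $s/2-k+j<\tfrac12$. The bound itself also fails: for $N=1$, $s=0.2$ one has $C_2=-3/77$ while $C_1^2=1/121$.

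Your density picture shows this failure is structural. For $N=1$ and $0<s<2$ the $C_\ell$ alternate in sign. Evaluating $\pi D$ at $\phi=\pi/2$, where $D=0$, gives $\sum_{\ell\ge1}|C_\ell|=\tfrac12$. Hence $\sum_{\ell\ge2}|C_\ell|=\tfrac12-\tfrac{s}{2+s}>C_1$ for $s<2/3$, so no absolute bound on the tail can work there. The inequality holds only through sign cancellation, which your formulation handles automatically. At $N=1$ it reduces to $\pi D(0)=\pi/\int_{-\pi/2}^{\pi/2}\cos^s x\,dx\ge1$.
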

This theorem shows that the moments at Gram points are larger than the corresponding CUE moments.

\section{Random Matrix Gram Points}

We introduce a notion of Gram points in the random matrix setting, motivated by the classical Gram points associated with the zeta function. Since the Riemann zeta function takes real values at its Gram points after a suitable normalization, we study the corresponding question for characteristic polynomials of random matrices. Using the functional equation satisfied by these characteristic polynomials, we determine the points at which they become real, thereby identifying the random matrix analogue of Gram points.

\begin{proof}[Proof of Theorem~\ref{thm:GramPoints}]
Note that
\begin{align*}
Z(\theta) &=\det(I - U e^{-i \theta}) \\
&= (-1)^N e^{-i N \theta} \det(U) \det(I-U^* e^{i\theta}) \\
&= (-1)^N e^{-i N \theta} \det(U) \overline{Z_U(\theta)} \\
&= \exp\!\left(i\big(\pi N - N\theta + \omega\big)\right) \overline{Z(\theta)}
\end{align*}
since  \((-1)^Ne^{-i N \theta} = e^{i(\pi N - N\theta)}\) and \(\det(U)=e^{i\omega}\).

Therefore, if we define
\[
g_m^\phi = \frac{\omega}{N} + \pi N - \frac{2\phi}{N} + \frac{2\pi m}{N} 
\]
for any integer $m$  (we note $g_m^\phi$ are all distinct modulo $2\pi$ for $m=1,\dots,N$), then
\[
Z(g_m^\phi) = e^{2i\phi} \overline{Z(g_m^\phi)} 
\]
so $Z(g_m^\phi) \in \RR e^{i\phi}$ at such points. We call $g_m^\phi$ the random matrix phi-Gram points.
\end{proof}

\section{Selberg's integral}
Our first step is to employ Selberg’s integral technique in order to derive a series representation for the moments. 

We will show that for $\Re(s)>-1$
\[
\mathbb{E}\left[
\frac{1}{N} \sum_{m=1}^N \left| Z(g_m^\phi) \right|^{s}\right]
= \mathbb{E}[\left| Z(0) \right|^{s}] \left( 1 + 2\cos(2\phi)C_1^{\,N}(s) + 2\cos(4\phi)C_2^{\,N}(s) + \cdots \right)
\]
where
\begin{equation*}
C_\ell^N(s) = \prod_{j=1}^{N}
\frac{\Gamma\!\left(\tfrac{s}{2} + j\right)^2 }{\Gamma\!\left(\tfrac{s}{2} + \ell + j\right)\Gamma\!\left(\tfrac{s}{2} - \ell + j\right)}.
\end{equation*}

\begin{proof}[Proof of Theorem~\ref{thm:ExactMmtAbsVal}]
Note that \(|Z(\theta)|^s\) is a $2\pi$-periodic function of $\theta$, and so has a Fourier series expansion
\[
|Z(\theta)|^s =  \sum_{k=-\infty}^\infty \widehat{f}_k e^{i k \theta}
\]
where 
\begin{equation}\label{eq:FourierCoeff}
\widehat{f}_{k}=\frac{1}{2\pi} \int_0^{2\pi} |\det(I-U e^{-i\theta})|^s e^{-i k \theta}d\theta.
\end{equation}
so averaging this over the phi-Gram points given in Theorem~\ref{thm:GramPoints},
\begin{align*}
\frac{1}{N}\sum_{m=1}^N |Z(g_m^\phi)|^s
&= \sum_{k \in \mathbb{Z}} \widehat{f}_k \, \frac{1}{N}\sum_{m=1}^N
\exp\!\left( i k \left( \frac{2\pi m}{N} + \frac{\omega}{N} + \pi N - \frac{2\phi}{N}\right) \right) \\
&= \sum_{\ell \in \mathbb{Z}} \widehat{f}_{lN} \, e^{i \ell \omega} \, e^{i \ell N \pi} e^{-2i\ell\phi}\\
&= \sum_{\ell \in \mathbb{Z}} (-1)^{\ell N}\widehat{f}_{\ell N} \, \det(U)^{\ell} e^{-2i\ell\phi}.
\end{align*}

This holds because
\[
\sum_{m=1}^N e^{i k (2\pi m/N)} = 
\begin{cases}
N & \text{ if } k \equiv 0 \pmod{N}\\
0 & \text{ otherwise}
\end{cases}
\]
and $\det(U) = e^{i\omega}$.

Therefore,  we want to compute 
\begin{equation}\label{eq:FourierExpansion}
\mathbb{E}\!\left[ \frac{1}{N}\sum_{m=1}^N |Z(g_m^\phi)|^s \right]=\sum_{\ell \in \mathbb{Z}} (-1)^{N \ell} \mathbb{E}\left[\widehat{f}_{\ell N} (\det U)^{\ell}\right]  e^{-2i\ell\phi}.
\end{equation}
Using the Weyl integration formula and \eqref{eq:FourierCoeff} we have
\begin{multline*}
(-1)^{N \ell} \mathbb{E}\left[\widehat{f}_{\ell N} \, (\det U)^{\ell}\right] =
\frac{1}{2\pi} \int^{2\pi}_0 \frac{1}{(2\pi)^N N!} \int^{2\pi}_0 \cdots \int^{2\pi}_0 \prod_{1\leq j<k\leq N}|e^{i \theta_j}-e^{i\theta_k}|^2 \\
\times \prod_{n=1}^N \left( (-1)^\ell |1-e^{-i\theta}e^{i\theta_n}|^s e^{i\ell \theta_n}\right)d\theta_1\cdots d\theta_N e^{-i\ell N \theta}d\theta.
\end{multline*}
When we shift the variables to \(\theta_n \mapsto \theta_n +\theta\) and use the periodicity of the integrand, the above equation becomes
\[
\frac{1}{(2\pi)^N N!} \int^{2\pi}_0 \cdots \int^{2\pi}_0 \prod_{1\leq j<k\leq N}|e^{i \theta_j}-e^{i\theta_k}|^2 \prod_{n=1}^N \left((-1)^\ell |1-e^{i\theta_n}|^s e^{i\ell \theta_n}\right)d\theta_1\cdots d\theta_N 
\]
because this shift changes  \(\prod|1-e^{i\theta}e^{i\theta_n}|^s \mapsto \prod|1-e^{i\theta_n}|\) and since \(\left( \prod e^{i\ell\theta_n} \right)e^{-i\ell N \theta} \mapsto \prod e^{i \ell \theta_n}\), and the remaining $\theta$-integral is trivial.
Now again shifting variables to \(\theta_n\mapsto\theta_n+\pi\),
\[
\frac{1}{(2\pi)^N N!} \int^{2\pi}_0 \cdots \int^{2\pi}_0 \prod_{1\leq j<k\leq N}|e^{i \theta_j}-e^{i\theta_k}|^2 \prod_{n=1}^N \left( |1+e^{i\theta_n}|^s e^{i\ell \theta_n}\right)d\theta_1\cdots d\theta_N 
\]
absorbs the $(-1)^\ell$ factor.

Using formula (1.18) from \cite{forrester2007importanceselbergintegral}  we plug in
\[
a =  \tfrac{s}{2} + \ell, \qquad b = \tfrac{s}{2} - \ell, \qquad \gamma = 1.
\]
Thus, the quantity we want  is
\begin{multline*}
\frac{1}{(2\pi)^N N!} \int^{2\pi}_0 \cdots \int^{2\pi}_0 \prod_{1\leq j<k\leq N}|e^{i \theta_j}-e^{i\theta_k}|^2 \prod_{n=1}^N \left( |1+e^{i\theta_n}|^s e^{i\ell \theta_n}\right)d\theta_1\cdots d\theta_N  \\
= \frac{1}{N!}\, M_N(a,b,1)
= \frac{1}{N!} \prod_{j=0}^{N-1}
\frac{\Gamma(2+j)\Gamma(1+s+j)}{\Gamma\!\left(\tfrac{s}{2} +1+ \ell +j\right)\Gamma\!\left(\tfrac{s}{2} - \ell +1+ j\right)}
\end{multline*}
Plugging this into \eqref{eq:FourierExpansion} we have shown
\begin{align*}
\mathbb{E}\!\left[ \frac{1}{N}\sum_{m=1}^N |Z(g_m^\phi)|^s \right] &= \sum_{\ell=-\infty}^{\infty} e^{-2i\ell\phi} \frac{1}{N!} \prod_{j=0}^{N-1}
\frac{\Gamma(2+j)\Gamma(1+s+j)}{\Gamma\!\left(\tfrac{s}{2} +1+ \ell +j\right)\Gamma\!\left(\tfrac{s}{2} - \ell +1+ j\right)} \\
&=\sum_{\ell=-\infty}^{\infty} e^{-2i\ell\phi} \frac{1}{N!} \prod_{j=1}^{N}
\frac{\Gamma(1+j)\Gamma(s+j)}{\Gamma\!\left(\tfrac{s}{2} + \ell +j\right)\Gamma\!\left(\tfrac{s}{2} - \ell + j\right)}\\
&= \sum_{\ell=-\infty}^{\infty} e^{-2i\ell\phi} \mathbb{E}[|Z(0)|^s] \prod_{j=1}^{N}
\frac{\Gamma\!\left(\tfrac{s}{2} + j\right)^2 }{\Gamma\!\left(\tfrac{s}{2} + \ell + j\right)\Gamma\!\left(\tfrac{s}{2} - \ell + j\right)}\\
&=\mathbb{E}[|Z(0)|^s](1+2\cos(2\phi)C_1^N(s)+2\cos(4\phi)C_2^N(s) + \cdots)
\end{align*}
for each \(C_\ell^N(s)\) is sum of \(\ell\)th entry of the sum. That is, for $\ell \geq 1$,
\[
C_\ell^{N}(s) = \prod_{j=1}^{N}
\frac{\Gamma\!\left(\tfrac{s}{2} + j\right)^2 }{\Gamma\!\left(\tfrac{s}{2} + \ell + j\right) \Gamma\!\left(\tfrac{s}{2} - \ell + j\right)}
\]
as required.
\end{proof}

\begin{remark}
If $s$ is an even integer, $s=2k$, say, then the result above simplifies to a finite sum, with each term in the summand being evidently positive when $\phi=0$.
\end{remark}

\begin{theorem}\label{thm:EvenInteger}
    When \(s=2k\) is an even integer, we have
  \[
\mathbb{E}\left[\frac{1}{N}\sum_{m=1}^N |Z(g_m^\phi)|^{2k} \right] = \mathbb{E}[|Z(0)|^{2k}]\left(1+2 \sum_{\ell=1}^k \cos(2\ell\phi) C_\ell^N(2k) \right)
\]
where $g_1,\dots,g_N$ are the random matrix Gram points defined in Theorem~\ref{thm:GramPoints}. The terms in the sum, $C_\ell^N(2k)$, are all positive for $1\leq \ell \leq k$.
\end{theorem}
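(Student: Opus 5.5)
The plan is to specialise Theorem~\ref{thm:ExactMmtAbsVal} to $s=2k$ and study the explicit product \eqref{eq:CellN}. With $s/2=k$,
\[
C_\ell^N(2k)=\prod_{j=1}^N \frac{\Gamma(k+j)^2}{\Gamma(k+\ell+j)\,\Gamma(k-\ell+j)} .
\]
The argument has two parts: first show that the series in Theorem~\ref{thm:ExactMmtAbsVal} terminates at $\ell=k$, and then show that the surviving terms are positive.

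\textbf{Truncation.} The factor $\Gamma(k-\ell+j)$ in the denominator has a pole whenever $k-\ell+j\le 0$. For $\ell>k$ this happens at $j=1,\dots,\min(N,\ell-k)$, so the reciprocal $1/\Gamma(k-\ell+j)$ vanishes there. Hence $C_\ell^N(2k)=0$, which is the interpretation stated in the theorem.

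To justify this interpretation rigorously, rather than just adopting a convention, I would go back to the Selberg-integral step. The constant term of $|1+e^{i\theta}|^{2k}e^{i\ell\theta}$-weighted averages is a polynomial in $e^{\pm i\theta_n}$. Indeed $|1+e^{i\theta}|^{2k}=(1+e^{i\theta})^k(1+e^{-i\theta})^k$ is a trigonometric polynomial of degree $k$ in each variable. So its Fourier coefficients $\widehat f_{\ell N}$ vanish for $|\ell|N>kN$, that is, for $|\ell|>k$.

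The quantity $\frac{1}{N!}M_N(a,b,1)$ is analytic in $s$. The right-hand product, read with $1/\Gamma$ entire, is also analytic in $s$, so the two agree at $s=2k$ as well, including the zeros. Either route gives the finite sum. Using $C_{-\ell}=C_\ell$, which is evident from the symmetry $\ell\mapsto-\ell$ of the product, the terms pair up into $2\cos(2\ell\phi)C_\ell^N(2k)$ for $1\le\ell\le k$.

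\textbf{Positivity.} For $1\le \ell\le k$ and $j\ge1$, every Gamma argument is a positive integer:
\[
k+j\ge 2,\qquad k+\ell+j\ge 3,\qquad k-\ell+j\ge 1 .
\]
So each factor of the product is a ratio of positive factorials, and $C_\ell^N(2k)>0$.

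If a cleaner closed form is wanted, telescoping gives
\[
C_\ell^N(2k)=\prod_{j=1}^N\prod_{i=0}^{\ell-1}\frac{k-\ell+j+i}{k+j+i}.
\]
This exhibits $C_\ell^N(2k)$ as a product of positive rationals, each less than $1$.

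\textbf{Main obstacle.} The only delicate point is the truncation. One must make sure that the vanishing for $\ell>k$ is a genuine identity rather than an artefact of evaluating a formula outside its range. I would settle this by the trigonometric-polynomial degree argument above, which is self-contained.
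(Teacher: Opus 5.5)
Your proof is correct and follows the paper's argument. You specialise Theorem~\ref{thm:ExactMmtAbsVal} to $s=2k$, remove the terms with $|\ell|>k$ using the pole of $\Gamma(k-\ell+j)$ at $j=1$, and get positivity because all Gamma arguments are positive integers. Your additional observation, which the paper omits, is a useful strengthening: $|Z(\theta)|^{2k}$ is a trigonometric polynomial of degree $kN$ in $\theta$, so $\widehat f_{\ell N}=0$ for $|\ell|>k$ already in \eqref{eq:FourierExpansion}, which makes the truncation an exact identity rather than a convention about $1/\Gamma$ at a pole.
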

\begin{proof}
For \(s=2k\), an even integer, the terms in the product for $C_\ell^N(2k)$ are
\[
\prod_{j=1}^N\frac{\Gamma(k+j)^2}
{\Gamma(k+j+\ell)\Gamma(k+j-\ell)}.
\]
and since the Gamma function has poles at non-positive integers, when \(|\ell| \geq k+1\) the $j=1$ term means the product vanishes. Hence the infinite sum truncates to finite sum, and we have
\[
\mathbb{E}\left[\frac{1}{N}\sum_{m=1}^N |Z(g_m^\phi)|^{2k} \right] = \mathbb{E}[|Z(0)|^{2k}]\left(1+2 \sum_{\ell=1}^k \cos(2\ell\phi) C_\ell^N(2k) \right).
\]
It is clear that every term in the product for $C_\ell^N(2k)$ is positive for $|\ell| \leq k$, so the entire sum is positive when $\phi=0$.
\end{proof}

\section{Convergence of the sum}
Returning to the general case of $s$ complex with $\Re(s)>-1$, in this section we prove Theorem~\ref{thm:first-main}, and show
\[
\sum^\infty_{\ell=1} C_\ell^N(s) = O\left(\frac{1}{N}\right).
\]

Furthermore, if we specialise to $s$ real and positive, we show that the sum is positive, which proves Theorem~\ref{thm:second-main}.

\begin{proof}[Proof of Theorem~\ref{thm:first-main}]
Recall the expression for $C_\ell^N(s)$, given in \eqref{eq:CellN}. Our aim is to show
\[
\sum_{\ell=1}^{\infty} \prod_{j=1}^{N}
\frac{\Gamma\!\left(\tfrac{s}{2} + j\right)^2 }{\Gamma\!\left(\tfrac{s}{2} + \ell + j\right)\Gamma\!\left(\tfrac{s}{2} - \ell + j\right)} = O\left( \frac{1}{N} \right)
\]

When \(\ell=1\),
\begin{align*}
C_1^N(s) &= \prod_{j=1}^{N}
\frac{\Gamma\!\left(\tfrac{s}{2} + j\right)^2 }{\Gamma\!\left(\tfrac{s}{2} + 1 + j\right)\Gamma\!\left(\tfrac{s}{2} - 1 + j\right)}\\
&=\prod_{j=1}^N\frac{\frac{s}{2}+j-1}{\frac{s}{2}+j}\\
&=\frac{s}{2N+s}
\end{align*}
and so $\left|C_1^N(s)\right| <1$ since $\Re(s)>-1$ and $N \geq 1$.

Next note that 
\begin{align}
|C_{\ell+1}^N(s)| &= |C_\ell^N(s)| \left| \prod_{j=1}^N \frac{\frac{s}{2}-\ell+j-1}{\frac{s}{2}+\ell+j}\right| \notag \\
&=  |C_1^N(s)| \left| \prod_{j=1}^N \prod_{k=1}^\ell \frac{\frac{s}{2}-k+j-1}{\frac{s}{2}+k+j} \right| \notag\\
& \leq|C_1^N(s)| \left| \prod_{j=1}^N \prod_{k=1}^\ell \frac{\frac{s}{2}-k+j}{\frac{s}{2}+k+j-1} \right| \notag\\
&=|C_1^N(s)||C_{\ell}^N(s)| \notag\\
& \leq|C_1^N(s)|^{\ell+1} .\label{eq:BoundCellInTermsOfC1}
\end{align}

Therefore we can bound the sum of $C_\ell^N(s)$
by summing a geometric series,
\begin{align*}
\sum_{\ell=1}^\infty|C_\ell^N(s)| &\leq \sum_{\ell=1}^\infty |C_1^N(s)|^\ell \\
&= \frac{|s|}{|2N+s|-{|s|}} \\
&= O\left(\frac{1}{N}\right)
\end{align*}
as required.
\end{proof}

\begin{proof}[Proof of Theorem~\ref{thm:second-main}]
We will exclude the case when \(s\) is an even positive number, since as we checked in Theorem~\ref{thm:EvenInteger}, in even cases, the series is finite and every term is strictly positive so the result follows.

When \(s\leq2N,\) we observe that $ C_1^N(s) = s / (2N+s) < \frac12$ so
\[
C_1^N(s) \geq \sum_{\ell=2}^\infty |C_1^N(s)|^\ell
\]
and from \eqref{eq:BoundCellInTermsOfC1} we have
\[
C_1^N(s) \geq \sum_{\ell=2}^\infty |C_\ell^N(s)|
\]
and hence the results follows.

The only remaining case is $s>2N$ and $s$ not an even integer. From the definition of $C_\ell^N(s)$ given in \eqref{eq:CellN}, 
\begin{equation}\label{cellprod}
C_\ell^N(s)=\prod_{j=1}^N \prod_{m=0}
^{\ell-1}\frac{\frac{s}{2}+j-\ell+m}{\frac{s}{2}+j+m}
\end{equation}
which makes it is clear that 
\(C_1^N(s),\cdots,C_{\lfloor s/2\rfloor}^N(s)\) are all positive. Therefore to prove the last remaining case of the theorem, it is sufficient to show that
\[
C_1^N(s) \geq \sum_{\ell=\lfloor s/2\rfloor+1}^\infty |C_\ell^N(s)|.
\]

First, observe that using \eqref{cellprod},
\[
\frac{|C_{\ell+1}|}{|C_\ell|}=\prod_{j=1}^N \frac{|\frac{s}{2}+j-\ell-1|}{\frac{s}{2}+j+\ell}.
\]
When \(s\) is an even integer, the expression for \(C_\ell\) may contain terms of the form \(\frac{0}{0}\), so it should be interpreted by a limiting argument. If \(s\) is not an even integer, no \(C_\ell\) vanishes, and no such issue arises.

Let \(L=\lfloor \frac{s}{2}\rfloor+1 \) and \(\ell=L+n,\;n\geq0.\) We have 
\begin{align*}
    |\frac{s}{2}+j-\ell-1|&=|\frac{s}{2}+j-L-n-1|\\&=|\frac{s}{2}+j-\lfloor \frac{s}{2}\rfloor-1-n-1|\\&<n+j,
\end{align*}

and since \(\frac{s}{2}>N,\;L\geq \frac{s}{2}>N\), we have \(\frac{s}{2}+L+n+j>2N+n+j.\)
Therefore,
\[
\frac{|C_{\ell+1}|}{|C_\ell|}<\prod^N_{j=1}\frac{n+j}{2N+n+j}\leq \frac{n+1}{n+3}.
\]
Thus
\[
|C_{L+n}|<|C_L|\prod^{n-1}_{q=0}\frac{q+1}{q+3}=|C_L|\frac{2}{(n+1)(n+2)}.
\]
Summing gives
\[
\sum_{\ell=L}^\infty |C_\ell|<2|C_L|.
\]

Therefore,
\[
\frac{C_L}{C_{L-1}}\leq  \frac{\frac{s}{2}+1-L}{\frac{s}{2}+L}<\frac{1}{2}
\]
and using 
\[
C_L< \frac{1}{2}C_{L-1}<\frac{1}{2}C_1,
\]
it follows that if $s>0$,
\[
\sum_{\ell=1}^\infty C_\ell^N(s) \geq 0
\]
This means that putting $\phi=0$ into Theorem~\ref{thm:ExactMmtAbsVal} yields
\begin{align*}
\mathbb{E}\left[\frac{1}{N} \sum_m|Z(g_m^0)|^s\right] &= \mathbb{E}[|Z(0)|^s] \left( 1+ 2\sum_{\ell=1}^\infty C_\ell^N(s) \right)\\
&\geq\mathbb{E}[|Z(0)|^s]
\end{align*}
as required.
 \end{proof}

\appendix

\section{Moments without absolute values}

In that case of Gram points, namely when $\phi=0$, the values of the characteristic polynomial are always real. There is some interest in understanding the discrete moments without absolute values. Obviously in the case of even integers, this will be exactly the same as what we calculated before, but for odd integers the size of the moments gives us an indication of how frequently zeta at the Gram points changes sign.

We remark that if Gram's Law held (and we know that it doesn't), then between every two consecutive Gram points there will be exactly one zero of zeta, which means the zeta function will never change sign at the Gram points. This, however, certainly does not happen. The same proof technique as above allows one to calculate the odd moments of the characteristic polynomial at the random matrix Gram points (without taking the absolute value) and they are smaller by a factor of $N^{1/4}$, thus showing the characteristic polynomial must change sign. We will return to these consequences later.

\begin{theorem}
For non-negative integer $s$ and $\phi=0$
\[
\mathbb{E}\left[\frac{1}{N}\sum_m Z(g_m^0)^s \right] = \sum_{\ell=0}^s \prod_{j=1}^N \frac{\Gamma(j+s) \Gamma(j)}{\Gamma(j+s-\ell)\Gamma(j+\ell)}
\]
\end{theorem}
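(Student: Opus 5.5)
The plan is to follow the proof of Theorem~\ref{thm:ExactMmtAbsVal}, replacing the Fourier series of $|Z|^s$ by the finite expansion of $Z^s$ itself. Since $s$ is a non-negative integer, $Z(\theta)^s=\det(I-Ue^{-i\theta})^s$ is a polynomial in $e^{-i\theta}$ of degree $Ns$:
\[
Z(\theta)^s=\sum_{k=0}^{Ns}a_k e^{-ik\theta},\qquad a_k=\frac{1}{2\pi}\int_0^{2\pi}Z(\theta)^s e^{ik\theta}\,d\theta .
\]
Averaging over the Gram points $g_m^0=\omega/N+\pi N+2\pi m/N$ of Theorem~\ref{thm:GramPoints} kills every $k\not\equiv 0 \pmod N$. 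Only $k=\ell N$ with $0\le \ell\le s$ survives, and it picks up the phase $e^{-i\ell\omega}e^{-i\ell N^2\pi}=(-1)^{\ell N}\det(U)^{-\ell}$. This is exactly why the sum in the statement runs over $\ell=0,\dots,s$. It then remains to compute $(-1)^{\ell N}\,\mathbb{E}[a_{\ell N}\det(U)^{-\ell}]$ for each $\ell$.

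For this I would write the expectation with the Weyl integration formula, using $Z(\theta)^s=\prod_n(1-e^{i(\theta_n-\theta)})^s$ and $\det U^{-\ell}=\prod_n e^{-i\ell\theta_n}$. Shifting $\theta_n\mapsto\theta_n+\theta$ removes the $\theta$-dependence, because $e^{i\ell N\theta}$ cancels against $\prod_n e^{-i\ell\theta}$, so the $\theta$-integral becomes trivial. What is left is the $N$-fold Haar integral of $\prod_n(1-e^{i\theta_n})^s e^{-i\ell\theta_n}$, multiplied by $(-1)^{\ell N}$. Next I would use the identity
\[
(1-e^{i\theta})^s e^{-i\ell\theta}=(-1)^\ell(1-e^{i\theta})^{s-\ell}(1-e^{-i\theta})^{\ell}.
\]
Raised to the product over $n$, this contributes $(-1)^{\ell N}$, which cancels the earlier sign. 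Then I shift $\theta_n\mapsto\theta_n+\pi$ to turn $1-e^{\pm i\theta_n}$ into $1+e^{\pm i\theta_n}$.

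This leaves precisely the Morris integral, formula (1.18) of \cite{forrester2007importanceselbergintegral}, with exponents $a=s-\ell$, $b=\ell$ and $\gamma=1$. That formula gives
\[
\frac{1}{N!}\prod_{j=0}^{N-1}\frac{\Gamma(1+s+j)\Gamma(2+j)}{\Gamma(1+s-\ell+j)\Gamma(1+\ell+j)} .
\]
Reindexing $j\mapsto j-1$ and using $\prod_{j=1}^N\Gamma(j+1)=N!\prod_{j=1}^N\Gamma(j)$, this becomes $\prod_{j=1}^N\Gamma(j+s)\Gamma(j)/(\Gamma(j+s-\ell)\Gamma(j+\ell))$. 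Summing over $\ell$ from $0$ to $s$ gives the claimed formula.

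The main obstacle is bookkeeping of phases and signs. First, the $e^{i\ell N^2\pi}$ factor and the two factors of $(-1)^{\ell N}$ must be confirmed to cancel exactly rather than leave a stray $(-1)^{\ell}$. Second, it must be checked that (1.18) is stated for the integrand with $(1+e^{i\theta})^{a}$ and $(1+e^{-i\theta})^{b}$ in the orientation used here; the result is symmetric under $\ell\leftrightarrow s-\ell$, which is a useful consistency check. The integer exponents $s-\ell,\ell\ge 0$ mean there are no convergence or branch issues.
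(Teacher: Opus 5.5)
Your proposal is correct and is essentially the argument the paper intends; it gives no separate proof, only the remark that the technique of Theorem~\ref{thm:ExactMmtAbsVal} carries over. Averaging over the Gram points selects the modes $k=\ell N$, and the Weyl formula with the rotations $\theta_n\mapsto\theta_n+\theta$ and $\theta_n\mapsto\theta_n+\pi$ reduces each term to Morris's integral with $a=s-\ell$, $b=\ell$, $\gamma=1$. Your sign check goes through: $e^{-i\ell N^2\pi}=(-1)^{\ell N}$ cancels exactly against the $(-1)^{\ell N}$ coming from $(1-e^{i\theta})^{\ell}e^{-i\ell\theta}=(-1)^{\ell}(1-e^{-i\theta})^{\ell}$, and the right side of (1.18) is symmetric in $a\leftrightarrow b$, so the orientation of the integrand does not matter.
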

Note that if $s=2k$, since a real number to an even power is positive, this is the same result we proved in Theorem~\ref{thm:EvenInteger}.

If $s=2k+1$, an odd integer, then
\begin{align*}
\mathbb{E}\left[\frac{1}{N}\sum_m Z(g_m^0)^{2k+1} \right] &= \sum_{\ell=0}^{2k+1} \prod_{j=1}^N \frac{\Gamma(j+2k+1) \Gamma(j)}{\Gamma(j+2k+1-\ell)\Gamma(j+\ell)} \\
&=2  \mathbb{E}\left[\left|Z(0)\right|^{2k+1} \right] \sum_{\ell=0}^k \prod_{j=1}^N \frac{\Gamma(j+k+\tfrac12)^2}{\Gamma(j+k+1+\ell)\Gamma(j+k-\ell)}
\end{align*}
where again there is a symmetry between $\ell$ and $2k+1-\ell$, but this time there is no ``central'' term that gets treated specially. 

\begin{remark}
    
For odd positive integers \(s=2k+1\), the comparison of the leading-order contributions reveals a difference of order \(N^{1/4}\). More precisely, in the absence of the absolute value, the main term is of order
\(N^{k(k+1)}\),
whereas the corresponding absolute moment has leading order
\(N^{(k+\frac12)^2}\).

\end{remark}

\section{A heuristic recipe for general integer $k$}

We adapt the heuristic recipe of Conrey--Farmer--Keating--Rubinstein--Snaith \cite{CFKRS05}, to construct (for positive integer $k$) an explicit degree $k^2$ polynomial, $K_k^\phi$, such that
\[
\sum_{g_m(\phi) \leq T } \left| \zeta(\tfrac12+i g_m(\phi)) \right|^{2k} 
=
\int_0^T  K_k^{\phi}\left(\log\frac{t}{2\pi}\right) \frac{1}{2\pi}\log \frac{t}{2\pi} \, dt + O\left(T^{1-\delta}\right)
\]
\begin{remark}
The error term is probably $O(T^{1/2+\epsilon})$ but these things are slightly controversial and not the focus of this paper, so we will stick with keeping this simply as a power saving error.
\end{remark}

\begin{remark}
The main difference with the traditional recipe is in our situation the unmatched chi terms do not vanish; instead they contribute terms like $\cos(2\phi)$, $\cos(4\phi)$ etc.
\end{remark}

Recall that the $g_m(\phi)$ are the phi-Gram points for the Riemann zeta function. For simplicity we will drop the $\phi$ and simply denote these as $g_m$ here. 

Rather than tackling $\left| \zeta(\tfrac12+i g_m )\right|^{2k} $ directly, instead we consider small shifts $\alpha_1,\dots,\alpha_{2k}$. Since Hardy's function, $Z(t)$, is a real function (for $t$ real) satisfying $|Z(t)| = |\zeta(\tfrac12+it)|$, we will instead consider
\[
\sum_{g_m \leq T } \prod_{r=1}^{2k} Z(\alpha_r + g_m) 
\]

\subsection{The modified recipe}

\subsubsection{Step 1: Use the approximate functional equation}

The first step is to replace each of the $2k$ Hardy functions with its approximate functional equation. 

Recall that $\chi(\tfrac12+it) = e^{-2i\theta(t)}$ and
\[
Z(t) = e^{i\theta(t)} \zeta(\tfrac12+it) = e^{-i\theta(t)} \zeta(\tfrac12-it)
\]
and so the approximate functional equation is
\[
Z(\alpha_r+g_m) = e^{i\theta(\alpha_r+g_m)} \sum_{n_r}\frac{1}{n_r^{\frac{1}{2}+i\alpha_r+ig_m}} + e^{-i\theta(\alpha_r+g_m)} \sum_{n_r}\frac{1}{n_r^{\frac{1}{2}-\alpha_r-ig_m}} 
\]
In this heuristic, we ignore all error terms and ignore the length of the sum.

\subsubsection{Step 2: Expand the brackets and keep the diagonal terms}

Expanding $2k$ brackets each containing two terms yields a total of $2^{2k}$ terms.

In the normal recipe (for continuous moments) would keep only the terms where the number of pieces with $e^{+i\theta(t)}$ is exactly equal to the number of pieces $e^{-i\theta(t)}$ , since otherwise the resulting chi factor would oscillate so quickly it would contribute only to the error term, not the main term.

However, for us this is not so! In the case of Gram points  there is no oscillation. This is because $g_m=g_m(\phi)$ are the points where $\theta(g_m)= m\pi+\phi$, and since 
\[
\theta(t) = \frac12 t \log\frac{t}{2\pi e} + O(1) \quad , \quad \theta'(t) = \frac12 \log\frac{t}{2\pi} + O\left(\frac{1}{t^2}\right)
\]
this means for large $m$ and small $\alpha_r$
\[
e^{i \theta(\alpha_r + g_m)} \sim (-1)^m e^{i\phi} \left(\frac{g_m}{2\pi} \right)^{i\alpha_r/2}  .
\]

Therefore, instead we expand and keep all terms, then reduce to the diagonal terms. A typical term will look like
\begin{equation*}
e^{i \phi \sum \epsilon_r} \left(\frac{g_m}{2\pi} \right)^{i\sum \epsilon_r \alpha_r/2} \sum_{n_1} \dots \sum_{n_{2k}}  \left( \prod_{r =1}^{2k} \frac{1}{n_r^{1/2+i\epsilon_r \alpha_r}} \right) 
\times \left( \frac{\prod_{r \in M} n_r}{\prod_{r \in P} n_r} \right)^{i g_m} .
\end{equation*}
Here 
\begin{itemize}
    \item $\epsilon_r=+1$ if we pick from the first piece, $e^{i\theta(\alpha_r+g_m)} \sum_{n_r}\frac{1}{n_r^{\frac{1}{2}+i\alpha_r+ig_m}}$ and $\epsilon_r=-1$ if we pick from the second piece $e^{-i\theta(\alpha_r+g_m)} \sum_{n_r}\frac{1}{n_r^{\frac{1}{2}-i\alpha_r-ig_m}}$ 
    \item $P$ is the set of indices where $\epsilon_r=+1$. 
    \item $M = \{1,\dots,2k\} \setminus P$ is the indices where where $\epsilon_r=-1$.
\end{itemize}

\begin{remark}
    Since there are $2k$ terms, 
\[
\sum_{r=1}^{2k} \epsilon_r = |P|-|M| = 2|P|-2k
\]
is always an even number.
\end{remark}

That is, the typical term looks like
\begin{equation*}
e^{2 i (|P|-k)\phi} \left(\frac{g_m}{2\pi}\right)^{i \sum \epsilon_r \alpha_r /2} \sum_{n_1} \dots \sum_{n_{2k}}  \left( \prod_{r=1}^{2k} \frac{1}{n_r^{1/2+i\epsilon_r \alpha_r}} \right) 
\times \left( \frac{\prod_{r \in M} n_r}{\prod_{r \in P} n_r} \right)^{i g_m} 
\end{equation*}

Each of the terms given above is summed over all $g_m$ up to height $T$. We heuristically assume (as in the traditional recipe) that
\[
\left( \frac{\prod_{r \in M} n_r}{\prod_{r \in P} n_r} \right)^{i g_m} 
\]
oscillates to zero (or at least, an error term) unless 
\[
\prod_{r \in P} n_r = \prod_{r \in M} n_r
\]
which we call the diagonal term.

We now now move to the Euler product, and the diagonal term is
\begin{equation*}
e^{2i(|P|-k)\phi} \left(\frac{g_m}{2\pi}\right)^{i\sum_{r=1}^{2k} \epsilon_r \alpha_r/2 } \prod_{p} \left(\sum_{\substack{u_1 \geq 0, \dots, u_{2k} \geq 0 \\ \sum_{r \in P} u_r = \sum_{r\in M} u_r}} p^{-\sum_{r \in P} u_r - \sum_{r=1}^{2k} i u_r \epsilon_r \alpha_r } \right)
\end{equation*}
where we note that
\[
p^{-(u_1+\dots+u_{2k})/2} = p^{-\sum_{r \in P} u_r / 2 - \sum_{r \in M} u_r/2} = p^{-\sum_{r \in P} u_r}
\]
since we are considering the diagonal term.

\subsubsection{Step 3: Extraction of the polar terms}

Since we are in the diagonal term, it is not possible for there to be just one $r$ with $u_r \neq 0$; there must be at least two distinct indices. The ``worst'' case will be when there is exactly one $r \in P$ and exactly one $s \in M$ with $u_r = 1$ and $u_s=1$, and all the other exponents are zero. This will contribute something looking like $p^{-1 - i \alpha_r + i  \alpha_s}$ (since, in the case when we pick one $r \in P$ and one $s \in M$, we have $\epsilon_r=+1$ and $\epsilon_s=-1$).

That is, the diagonal term looks like
\begin{equation*}
e^{2i(|P|-k)\phi} \left(\frac{g_m}{2\pi}\right)^{i\sum_{r=1}^{2k} \epsilon_r \alpha_r/2 } \prod_{p} \left(1 + \sum_{(r,s) \in (P,M)} p^{-1 - i \alpha_r +i\alpha_s} + \dots \right)
\end{equation*}
where $\dots$ denotes terms like $p^{-2 + \dots}$ and higher powers, all of which are convergent for small $\alpha_r$.

The non-convergent parts can be extracted as zeta functions, since
\begin{multline*}
\mathcal{F}_P(\alpha_1,\dots,\alpha_{2k}) \\
= \prod_{p} \left\{ \left(1 + \sum_{(r,s) \in (P,M)} p^{-1 - i \alpha_r + i \alpha_s} + \dots \right) \prod_{(r,s) \in (P,M)} \left(1 - p^{-1 - i \alpha_r + i \alpha_s} \right) \right\}
\end{multline*}
is convergent, since the term inside the product will look like $1+c p^{-2}$ for some $c \ll p^\epsilon$.

If we define
\[
\mathcal{Z}_P(\alpha_1,\dots,\alpha_{2k}) = \mathcal{F}_P(\alpha_1,\dots,\alpha_{2k}) \prod_{(r,s) \in (P,M)} \zeta(1 + i\alpha_r - i \alpha_s) 
\]
where, as stated above, $P$ is the set of indices where $\epsilon_r=+1$ and $M = \{1,\dots,2k\} \setminus P$, then we have shown that the diagonal term associated to the subset $P$ is
\[
e^{2i(|P|-k)\phi} \left(\frac{g_m}{2\pi}\right)^{i\sum_{r=1}^{2k} \epsilon_r \alpha_r/2} \mathcal{Z}_P(\alpha_1,\dots,\alpha_{2k}) .
\]

In the cases where $P=\emptyset$ or $M=\emptyset$, the product is empty so is given the value $1$.

\begin{remark}
    An alternative expression for the middle piece is
    \[
    \left(\frac{g_m}{2\pi}\right)^{i\sum_{r \in P} \epsilon_r \alpha_r/2 - i \sum_{r \in M} \epsilon_r \alpha_r/2 } 
    \]
    but we won't use this.
\end{remark}

\subsubsection{Step 4: Taking the limit as the shifts tend to zero.}

To summarise, we have shown
\begin{equation*}
    \sum_{g_m \leq T } \prod_{r=1}^{2k} Z(\alpha_r+ g_m)
    =  \sum_{g_m \leq T } \left\{ \sum_{\ell=-k}^k e^{2 i \ell \phi} \sum_{\substack{P \subseteq \{1,\dots,2k\} \\ |P|=k+\ell}} \left(\frac{g_m}{2\pi}\right)^{i\sum \epsilon_r \alpha_r/2 } \mathcal{Z}_P(\alpha_1,\dots,\alpha_{2k}) \right\}
\end{equation*}

Letting $(\alpha_1,\dots,\alpha_{2k}) \to (0,\dots,0)$ introduces a large number of poles. However, they all cancel each other out.

For each $ \ell \in \{-k,\dots, k\}$, we have
\[
\sum_{\substack{P \subseteq \{1,\dots,2k\} \\ |P|=k+\ell}} \left(\frac{g_m}{2\pi}\right)^{i\sum \epsilon_r \alpha_r /2} \mathcal{Z}_P(\alpha_1,\dots,\alpha_{2k}) \to \widetilde L^\ell_k\left(\log \frac{g_m}{2\pi} \right)
\]
where $L_k^\ell$ is a polynomial of degree $k^2-\ell^2$ (since $|P| = k+\ell$, so $|M|=k-\ell$), and we observe both that $L_k^0 = \widetilde K_k$, the kernel from the continuous $2k$th moment, and also that $L_k^\ell = L_k^{-\ell}$ (they are complex conjugates of each other which are real when the limit is taken).

Defining
\[
K_k^\phi(x) = \widetilde K_k(x) + \sum_{\ell=1}^k 2 \cos(2\ell\phi) L_k^\ell(x)
\]
we have shown
\begin{align*}
\sum_{g_m \leq T} \left|\zeta(\tfrac12+i g_m)\right|^{2k} &= \sum_{g_m \leq T} Z(g_m)^{2k} \\
&= \sum_{g_m \leq T} K_k^\phi\left(\log \frac{g_m}{2\pi}\right) \\
&=\int_0^T \frac{1}{2\pi} \log \frac{t}{2\pi} K_k^\phi\left(\log \frac{t}{2\pi}\right) dt
\end{align*}
since the sum on the RHS depends smoothly on the Gram points, and they are very regularly spaced, occurring with density $\frac{1}{2\pi} \log \frac{t}{2\pi}$, enabling us to move from a sum to an integral.

\subsection{Example: $k=1$}

We have
\begin{center}
    \begin{tabular}{|c|c|c|}
    \hline
    $\ell$ & $P$ & $\mathcal{Z}_P(\alpha_1,\alpha_{2})$  \\
    \hline
    $-1$ & $\emptyset$ &  $1$ \\
    $0$ & $\{1\}$ & $\zeta(1+i\alpha_1-i\alpha_2)$ \\
    $0$ & $\{2\}$ & $\zeta(1+i\alpha_2-i\alpha_1)$ \\
    $1$ & $\{1,2\}$ & $1$ \\
    \hline
     \end{tabular}
\end{center}

Therefore
\[
L_1^1(x) =  \lim_{\substack{\alpha_1 \to 0 \\ \alpha_2 \to 0}} e^{ix(\alpha_1+\alpha_2)/2} = 1 =  \lim_{\substack{\alpha_1 \to 0 \\ \alpha_2 \to 0}} e^{ix(-\alpha_1-\alpha_2)/2}  = L_1^{-1}(x) 
\]
\[
L_1^1(x) =  \lim_{(\alpha_1,\alpha_2) \to (0,0)} e^{ix(\alpha_1+\alpha_2)/2} = 1 =  \lim_{(\alpha_1,\alpha_2) \to (0,0)} e^{ix(-\alpha_1-\alpha_2)/2}  = L_1^{-1}(x) 
\]
and
\begin{align*}
L_1^0(x) &= \lim_{\substack{\alpha_1 \to 0 \\ \alpha_2 \to 0}} e^{ix(\alpha_1-\alpha_2)/2} \zeta(1+i\alpha_1-i\alpha_2) + e^{ix(-\alpha_1+\alpha_2)/2} \zeta(1+i\alpha_2-i\alpha_1) \\
&=x+2 \gamma
\end{align*}

In conclusion,
\[
K_1^\phi(x) = x + 2\gamma + 2\cos(2\phi)
\]
in perfect agreement with \eqref{eq:Kernelk1} which is the translation of the result of Kalpokas and Stueding~\cite{KalSte09} into an integrable kernel.

However, our result yields more than that, stating
\begin{multline*}
\sum_{g_m \leq T} Z(g_m+\alpha_1)Z(g_m+\alpha_2) \\
= \int_0^T \frac{1}{2\pi} \log \frac{t}{2\pi} \left(2 \Re \left\{\left(\frac{t}{2\pi}\right)^{i(\alpha_1-\alpha_2)/2} \zeta(1+i\alpha_1-i\alpha_2) \right\} + 2\cos(2\phi+\tfrac12\alpha_1+\tfrac12\alpha_2) \right) dt \\
+ O\left(T^{1-\delta}\right)
\end{multline*}

\subsection{Example: $k=2$}\label{sect:k2Appendix}

\begin{center}
    \begin{tabular}{|c|c|c|}
    \hline
    $\ell$ & $P$ & $\mathcal{Z}_P(\alpha_1,\alpha_2,\alpha_3,\alpha_4)$  \\
    \hline
    $-2$ & $\emptyset$ &  $1$ \\
    $-1$ & $\{1\}$ & $\zeta(1+i\alpha_1-i\alpha_2) \zeta(1+i\alpha_1-i\alpha_3) \zeta(1+i\alpha_1-i\alpha_4)$ \\
    $-1$ & $\{2\}$ &  $\zeta(1+i\alpha_2-i\alpha_1) \zeta(1+i\alpha_2-i\alpha_3) \zeta(1+i\alpha_2-i\alpha_4)$ \\
    $-1$ & $\{3\}$ &  $\zeta(1+i\alpha_3-i\alpha_1) \zeta(1+i\alpha_3-i\alpha_2) \zeta(1+i\alpha_3-i\alpha_4)$\\
    $-1$ & $\{4\}$ & $\zeta(1+i\alpha_4-i\alpha_1) \zeta(1+i\alpha_4-i\alpha_2) \zeta(1+i\alpha_4-i\alpha_3)$ \\
    $0$ & $\{1,2\}$ & $\frac{\zeta(1+i\alpha_1-i\alpha_3) \zeta(1+i\alpha_1-i\alpha_4) \zeta(1+i\alpha_2-i\alpha_3) \zeta(1+i\alpha_2-i\alpha_4)}{\zeta(2+i\alpha_1+i\alpha_2-i\alpha_3-i\alpha_4)}$ \\
    $0$ & $\{1,3\}$ & $\frac{\zeta(1+i\alpha_1-i\alpha_2) \zeta(1+i\alpha_1-i\alpha_4) \zeta(1+i\alpha_3-i\alpha_2) \zeta(1+i\alpha_3-i\alpha_4)}{\zeta(2+i\alpha_1-i\alpha_2+i\alpha_3-i\alpha_4)}$\\
    $0$ & $\{1,4\}$ & $\frac{\zeta(1+i\alpha_1-i\alpha_2) \zeta(1+i\alpha_1-i\alpha_3) \zeta(1+i\alpha_4-i\alpha_2) \zeta(1+i\alpha_4-i\alpha_3)}{\zeta(2+i\alpha_1-i\alpha_2-i\alpha_3+i\alpha_4)}$\\
    $0$ & $\{2,3\}$ & $\frac{\zeta(1+i\alpha_2-i\alpha_1) \zeta(1+i\alpha_2-i\alpha_4) \zeta(1+i\alpha_3-i\alpha_1) \zeta(1+i\alpha_3-i\alpha_4)}{\zeta(2-i\alpha_1+i\alpha_2+i\alpha_3-i\alpha_4)}$\\
    $0$ & $\{2,4\}$ & $\frac{\zeta(1+i\alpha_2-i\alpha_1) \zeta(1+i\alpha_2-i\alpha_3) \zeta(1+i\alpha_4-i\alpha_1) \zeta(1+i\alpha_4-i\alpha_3)}{\zeta(2-i\alpha_1+i\alpha_2-i\alpha_3+i\alpha_4)}$\\
    $0$ & $\{3,4\}$ & $\frac{\zeta(1+i\alpha_3-i\alpha_1) \zeta(1+i\alpha_3-i\alpha_2) \zeta(1+i\alpha_4-i\alpha_1) \zeta(1+i\alpha_4-i\alpha_2)}{\zeta(2-i\alpha_1-i\alpha_2+i\alpha_3+i\alpha_4)}$ \\
    $1$ & $\{2,3,4\}$ & $\zeta(1+i\alpha_2-i\alpha_1) \zeta(1+i\alpha_3-i\alpha_1) \zeta(1+i\alpha_4-i\alpha_1)$ \\
    $1$ & $\{1,3,4\}$ & $\zeta(1+i\alpha_1-i\alpha_2) \zeta(1+i\alpha_3-i\alpha_2) \zeta(1+i\alpha_4-i\alpha_2)$  \\
    $1$ & $\{1,2,4\}$ & $\zeta(1+i\alpha_1-i\alpha_3) \zeta(1+i\alpha_2-i\alpha_3) \zeta(1+i\alpha_4-i\alpha_3)$  \\
    $1$ & $\{1,2,3\}$ & $\zeta(1+i\alpha_1-i\alpha_4) \zeta(1+i\alpha_2-i\alpha_4) \zeta(1+i\alpha_3-i\alpha_4)$ \\
    $2$ & $\{1,2,3,4\}$ & $1$ \\
    \hline
     \end{tabular}
\end{center}

Therefore we have
\[
L_2^{-2}(x) = L_2^2(x)  =1
\]
and
\begin{align*}
L_2^{-1}(x) &= \lim_{\substack{\alpha_1 \to 0\\\alpha_2 \to 0\\ \alpha_3 \to 0\\ \alpha_4 \to 0}} \biggl(e^{ix (\alpha_1-\alpha_2-\alpha_3-\alpha_4)/2} \mathcal{Z}_{\{1\}} + e^{ix (-\alpha_1+\alpha_2-\alpha_3-\alpha_4)/2} \mathcal{Z}_{\{2\}}\\
&\quad\quad\quad\quad   + e^{ix (-\alpha_1-\alpha_2+\alpha_3-\alpha_4)/2} \mathcal{Z}_{\{3\}}  +  e^{ix (-\alpha_1-\alpha_2-\alpha_3+\alpha_4)/2} \mathcal{Z}_{\{4\}} \biggr) \\
&=\frac{1}{6} x^3 + 2 \gamma x^2 + \left(6 \gamma ^2-4 \gamma _1\right) x -12 \gamma 
   \gamma _1+2 \gamma _2+4 \gamma ^3
\end{align*}
and
\begin{align*}
L_2^{1}(x) &= \lim_{\substack{\alpha_1 \to 0\\\alpha_2 \to 0\\ \alpha_3 \to 0\\ \alpha_4 \to 0}} \biggl(e^{ix (-\alpha_1+\alpha_2+\alpha_3+\alpha_4)/2} \mathcal{Z}_{\{2,3,4\}} + e^{ix (\alpha_1-\alpha_2+\alpha_3+\alpha_4)/2} \mathcal{Z}_{\{1,3,4\}} \\
&\quad\quad\quad\quad + e^{ix (\alpha_1+\alpha_2-\alpha_3+\alpha_4)/2} \mathcal{Z}_{\{1,2,4\}} +   e^{ix (\alpha_1+\alpha_2+\alpha_3-\alpha_4)/2} \mathcal{Z}_{\{1,2,3\}} \biggr) \\
&=\frac{1}{6} x^3 + 2 \gamma x^2 + \left(6 \gamma ^2-4 \gamma _1\right) x -12 \gamma 
   \gamma _1+2 \gamma _2+4 \gamma ^3
\end{align*}
That the two have the same limit is not surprising, since if we replace the $\mathcal{Z}$s with the zeta functions, then we see the two expressions are the same with $\alpha_1 \leftrightarrow \alpha_3$ and  $\alpha_2 \leftrightarrow \alpha_4$. This limit is the same as that given in \eqref{eq:L2}.

Finally, we recover the usual kernel for the continuous fourth moment
\begin{align*}
L_2^{0}(x) &= \lim_{\substack{\alpha_1 \to 0\\\alpha_2 \to 0\\ \alpha_3 \to 0\\ \alpha_4 \to 0}} \biggl(e^{ix (\alpha_1+\alpha_2-\alpha_3-\alpha_4)/2} \mathcal{Z}_{\{1,2\}} + e^{ix (\alpha_1-\alpha_2+\alpha_3-\alpha_4)/2}  \mathcal{Z}_{\{1,3\}} \\
&\quad\quad\quad\quad  + e^{ix (\alpha_1-\alpha_2-\alpha_3+\alpha_4)/2}  \mathcal{Z}_{\{1,4\}} + e^{ix (-\alpha_1+\alpha_2+\alpha_3-\alpha_4)/2}  \mathcal{Z}_{\{2,3\}} \\
&\quad\quad\quad\quad   + e^{ix (-\alpha_1+\alpha_2-\alpha_3+\alpha_4)/2}  \mathcal{Z}_{\{2,4\}}  + e^{ix (-\alpha_1-\alpha_2+\alpha_3+\alpha_4)/2} \mathcal{Z}_{\{3,4\}} \biggr) \\
&= \frac{1}{2 \pi ^2} x^4 + \left(\frac{8 \gamma }{\pi ^2}-\frac{24 \zeta '(2)}{\pi ^4} \right) x^3 + a_2 x^2 + a_1 x + a_0
\end{align*}
with
\[
a_2 = -\frac{72 \zeta ''(2)}{\pi ^4}-\frac{288 \gamma  \zeta '(2)}{\pi
   ^4}+\frac{864 \zeta '(2)^2}{\pi ^6}-\frac{12 \gamma _1}{\pi
   ^2}+\frac{42 \gamma ^2}{\pi ^2}
\]
\begin{multline*}
a_1 = -\frac{96 \zeta ^{(3)}(2)}{\pi ^4}-\frac{576 \gamma  \zeta
   ''(2)}{\pi ^4}-\frac{1008 \gamma ^2 \zeta '(2)}{\pi
   ^4}+\frac{6912 \gamma  \zeta '(2)^2}{\pi ^6}-\frac{20736 \zeta
   '(2)^3}{\pi ^8}+\frac{3456 \zeta '(2) \zeta ''(2)}{\pi
   ^6}\\
   +\frac{288 \gamma _1 \zeta '(2)}{\pi ^4}-\frac{120 \gamma 
   \gamma _1}{\pi ^2}-\frac{12 \gamma _2}{\pi ^2}+\frac{72 \gamma
   ^3}{\pi ^2}
\end{multline*}
and
\begin{multline*}
a_0 = -\frac{48 \zeta ^{(4)}(2)}{\pi ^4}-\frac{384 \gamma  \zeta
   ^{(3)}(2)}{\pi ^4}-\frac{1008 \gamma ^2 \zeta ''(2)}{\pi
   ^4}+\frac{1728 \zeta ''(2)^2}{\pi ^6}-\frac{864 \gamma ^3
   \zeta '(2)}{\pi ^4}+\frac{12096 \gamma ^2 \zeta '(2)^2}{\pi
   ^6} \\
   -\frac{82944 \gamma  \zeta '(2)^3}{\pi ^8}+\frac{248832
   \zeta '(2)^4}{\pi ^{10}}+\frac{2304 \zeta ^{(3)}(2) \zeta
   '(2)}{\pi ^6}+\frac{13824 \gamma  \zeta '(2) \zeta ''(2)}{\pi
   ^6} \\ 
   -\frac{62208 \zeta '(2)^2 \zeta ''(2)}{\pi ^8}
   +\frac{288
   \gamma _1 \zeta ''(2)}{\pi ^4}+\frac{1440 \gamma  \gamma _1
   \zeta '(2)}{\pi ^4}+\frac{144 \gamma _2 \zeta '(2)}{\pi
   ^4}-\frac{3456 \gamma _1 \zeta '(2)^2}{\pi ^6} \\
   -\frac{144
   \gamma ^2 \gamma _1}{\pi ^2} 
   +\frac{84 \left(\gamma
   _1\right){}^2}{\pi ^2}+\frac{12 \gamma  \gamma _2}{\pi
   ^2}+\frac{20 \gamma _3}{\pi ^2}+\frac{36 \gamma ^4}{\pi ^2}
\end{multline*}

Putting all this together,
\[
\sum_{g_m \leq T} \left|\zeta(\tfrac12+i g_m)\right|^{4}  =\int_0^T \frac{1}{2\pi} \log \frac{t}{2\pi} K_2^\phi\left(\log \frac{t}{2\pi}\right) dt + O\left(T^{1-\epsilon}\right)
\]
with
\[
K_2^\phi(x) = L_2^{0}(x) + 2\cos(2\phi) L_2^{1}(x) + 2\cos(4\phi)
\]

\section*{Acknowledgments and AI declaration}

This research will form part of the second author's PhD thesis. The work was carried out during 2025/26, and AI was not used in the writing of this paper, apart from a literature search.

\bibliography{bibliog}    
\bibliographystyle{plain}

\end{document}